\newtheoremstyle{stuff}{\topsep}{\topsep}%
     {\itshape}
     {}
     {\bfseries}
     {}
     {.5em}
     {\thmnote{#3}}
\theoremstyle{plain}
\newtheorem{thm}{Theorem}
\newtheorem{lem}[thm]{Lemma}
\theoremstyle{remark}
\newtheorem*{rem}{Remark}
\theoremstyle{definition}
\newtheorem{defn}{Definition}
\theoremstyle{stuff}
\numberwithin{equation}{section}
\newcommand{\bea}{\begin{eqnarray}}
\newcommand{\eea}{\end{eqnarray}}
\newcommand{\be}{\begin{equation}}
\newcommand{\ee}{\end{equation}}
\begin{document}

\pagestyle{empty}

\begin{flushright}
\begin{tabular}{l}
 \\
\end{tabular}
\end{flushright}

{\noindent \Large \bf\fontfamily{pag}\selectfont  The remodeling conjecture and the Faber-Pandharipande\\[0.6em] formula}\\

\vspace*{0.2cm}
\noindent \rule{\linewidth}{0.5mm}

\vspace*{0.8cm}

{\noindent \fontfamily{pag}\selectfont  Vincent Bouchard$^\dagger$, Andrei Catuneanu$^\dagger$, Olivier Marchal$^\dagger$ and Piotr Su\l kowski$^\ddagger$}\\[2em]
{\small {\it \indent $^\dagger$ Department of Mathematical and Statistical Sciences\\
\indent University of Alberta\\
\indent 632 CAB, Edmonton, Alberta T6G 2G1\\
\indent Canada}\\
\indent \url{vincent@math.ualberta.ca}\\
\indent \url{catunean@ualberta.ca}\\
\indent \url{olivier.marchal@polytechnique.org}}\\[0.3em]

{{\small {\it \indent $^\ddagger$ California Institute of Technology\\
\indent Pasadena, CA 91125\\
\indent USA\\[0.2em]
\indent Faculty of Physics, University of Warsaw\\
\indent ul. Ho{\.z}a 69, 00-681 Warsaw\\
\indent Poland}\\
\indent \url{psulkows@theory.caltech.edu}}\\[0.3em]

\vspace*{0.8cm}

\hspace*{1cm}
\parbox{11.5cm}{{\sc Abstract:} In this note, we prove that the free energies $F_g$ constructed from the Eynard-Orantin topological recursion applied to the curve mirror to $\mathbb{C}^3$ reproduce the Faber-Pandharipande formula for genus $g$ Gromov-Witten invariants of $\mathbb{C}^3$. This completes the proof of the remodeling conjecture for $\mathbb{C}^3$. }
\pagebreak

\pagestyle{fancy}
\pagenumbering{arabic}

\tableofcontents 


%

\section{Introduction}

According to the ``remodeling conjecture'' \cite{Bouchard:2009, Marino:2008}, the generating functions for Gromov-Witten invariants of a toric Calabi-Yau threefold $X$ can be computed by applying the Eynard-Orantin  topological recursion \cite{Eynard:2007,Eynard:2008} to the family of complex curves mirror to $X$. More precisely, the Eynard-Orantin topological recursion produces an infinite tower of meromorphic differentials $W^g_n$, which are mapped by the open/closed mirror map to generating functions of open Gromov-Witten invariants of $(X,L)$, where $L$ is an appropriate Lagrangian submanifold of $X$. The recursion also produces an infinite tower of free energies $F_g$ that are mapped by the closed mirror map to generating functions of closed Gromov-Witten invariants of $X$.

The simplest case to consider is when $X = \mathbb{C}^3$. In this case, open Gromov-Witten invariants can be computed using the topological vertex formalism \cite{Aganagic:2005,Li:2009}. The generating functions $W^g_n$ for open Gromov-Witten invariants can be written in terms of Hodge integrals. For this particular geometry, it was proved independently by Chen and Zhou that the ``open part'' of the remodeling conjecture is true \cite{Chen:2009, Zhou:2009}, namely, that the meromorphic differentials $W^g_n$ constructed from the Eynard-Orantin topological recursion applied to the curve mirror the $\mathbb{C}^3$ indeed reproduce the open Gromov-Witten generating functions (see also \cite{Zhu:2010}).

To complete the proof of the remodeling conjecture for $\mathbb{C}^3$, it remains to be proved that the free energies $F_g$ reproduce the closed Gromov-Witten invariants of $\mathbb{C}^3$. The only non-zero Gromov-Witten invariants of $\mathbb{C}^3$ correspond to constant maps, and have been computed many years ago by Faber and Pandharipande \cite{Faber:2000}, giving the well known result (for $g \geq 2$):
\begin{equation} \label{Final}
F_g = (-1)^g \frac{|B_{2g}| |B_{2g-2}|}{2(2g) (2g-2) (2g-2)!},
\end{equation}
where $B_{n}$ is the $n$'th Bernoulli number.\footnote{We define the Bernoulli numbers through the generating function:
$$ 
\frac{t}{\mathrm{e}^t-1} = \sum_{m=0}^\infty B_m \frac{t^m}{m!}.
$$ 
}
$F_g$ is the Gromov-Witten invariant for constant maps from genus $g$ Riemann surfaces to $\mathbb{C}^3$. We also recall that the above free energies arise in the $\lambda$ expansion of $M(q)^{1/2} = \exp \big( \sum_{g=0}^{\infty} \lambda^{2g-2} F_g \big)$, where $M(q)$ is the MacMahon function
\begin{equation}\label{eq:macmah}
M(q) = \prod_{k=1}^\infty \left(1-q^k \right)^{-k},
\end{equation}
and $q = \mathrm{e}^{\mathrm{i} \lambda}$. 

As part of a broader study of Gromov-Witten invariants for constant maps from the point of view of the topological recursion, it was conjectured in \cite{BS:2011} that the free energies $F_g$ computed from the Eynard-Orantin topological recursion applied to the curve mirror to $\mathbb{C}^3$ indeed reproduce the Faber-Pandharipande formula. The conjecture was checked computationally up to genus $7$.  Our main theorem in this note is a proof of this conjecture, thus completing the proof of the remodeling conjecture for $\mathbb{C}^3$. Our proof relies on the previous work of Chen and Zhou \cite{Chen:2009,Zhou:2009} where the open part of the remodeling conjecture for $\mathbb{C}^3$ is proved. The starting point can also be seen as a particular case of the recent work of Eynard in \cite{Eynard:2011}, as explained in Appendix \ref{appA}.

\begin{rem}
Given a matrix model, the topological recursion applied to its spectral curve should reproduce the free energies of the matrix model. So one might be tempted to use this approach to prove the theorem in this paper, by constructing a matrix model for $M(q)^{1/2}$ and computing its spectral curve. However, as discussed in \cite{BS:2011}, the constant contributions to the free energies are subtle, and the recursion may produce results that differ from the matrix model; hence matrix models cannot really be used to prove our main theorem. We illustrate this issue in Appendix \ref{appB} for the case of $\mathbb{C}^3$ studied in this paper.
\end{rem}

\begin{rem}
After completion of this work, we were informed of an independent proof of our main theorem by Shengmao Zhu, using very similar ideas \cite{Zhu:2011}. 
\end{rem}

\subsection*{Outline}

We review the fundamentals of the Eynard-Orantin topological recursion in Subsection \ref{s:EO} and the remodeling conjecture in Subsection \ref{s:remodeling}. We then specialize to the $\mathbb{C}^3$ geometry in Subsection \ref{s:C3}, describing the mirror geometry and the statement of Chen and Zhou's theorem in terms of Hodge integrals. Section \ref{s:proof} is devoted to the proof of our main theorem. In Section \ref{s:conclusion} we conclude with a few comments. We discuss in Appendix \ref{appA} the relation between Chen and Zhou's theorem and the recent work of Eynard \cite{Eynard:2011}. In Appendix \ref{appB} we discuss the relation with matrix models alluded to above.

\subsection*{Acknowledgments}
We would like to thank Renzo Cavalieri, Bertrand Eynard, Melissa Liu, Nicolas Orantin and Jian Zhou for enjoyable discussions. 
The research of V.B. and O.M. are supported by a University of Alberta startup grant and an NSERC Discovery grant. The research of A.C. is supported by an NSERC Undergraduate Student Research Award.
The research of P.S. is supported by the DOE grant DE-FG03-92ER40701FG-02 and the European Commission under the Marie-Curie International Outgoing Fellowship Programme.

\section{Background}

\label{s:background}

\subsection{Eynard-Orantin topological recursion}

\label{s:EO}

In this paper we prove that the remodeling conjecture is true for the free energies $F_g$ constructed from the mirror curve to $\mathbb{C}^3$. The remodeling conjecture is based on the Eynard-Orantin topological recursion \cite{Eynard:2007, Eynard:2008}. In this section we define the Eynard-Orantin topological recursion.

\subsubsection{Ingredients}

We start with a smooth complex curve
\begin{equation}
C = \{ H(x,y) = 0 \}
\end{equation}
in $\mathbb{C}^2$ or $(\mathbb{C}^*)^2$ often called ``spectral curve".
It defines a non-compact Riemann surface, which we also denote by $C$. $x,y: C \to \mathbb{C}$ are holomorphic functions on $C$. 

We assume that the map $x: C \to \mathbb{C}$ has only simple ramification points. In this paper we focus on the case with a single ramification point. Let $a \in C$ be the ramification point of $x$. Locally near $a$ the map is a double-sheeted covering, hence we have a deck transformation map
\begin{equation}
s: U \to U
\end{equation}
which is defined locally in a neighborhood $U$ of $a$. The deck transformation map means that
\begin{equation}
x(t) = x(s(t))
\end{equation}
for some local coordinate $t$ near $a$.

The type of objects that we will be interested in are meromorphic symmetric differentials on $C^n$. In local coordinates $z_i := z(p_i)$, $p_i \in C$, $i=1,\ldots,n$ a degree $n$ differential can be written as\footnote{For simplicity we will omit the tensor product symbol $\otimes$ between the differentials.}
\begin{equation}
W_n(p_1, \ldots, p_n) = w_n(z_1, \ldots, z_n) \mathrm{d} z_1 \cdots \mathrm{d} z_n,
\end{equation}
where $w(z_1, \ldots, z_n)$ is meromorphic in each variable.

To initialize the recursion we need to introduce a particular degree $2$ differential.
\begin{defn}\label{d:bergman}
We define $W_2^0(p_1,p_2)$ to be the \emph{fundamental normalized bi-differential}  \cite[p.20]{Fay:1970} which is uniquely defined by the conditions:
\begin{itemize}
\item It is symmetric, $W_2^0(p_1, p_2) = W_2^0(p_2,p_1)$;
\item It has its only pole, which is double, along the diagonal $p_1 = p_2$, with no residue; its expansion in this neighborhood has the form
\begin{equation}
W_2^0(p_1, p_2) = \left(\frac{1}{(z_1-z_2)^2} + \text{regular} \right) \mathrm{d} z_1 \mathrm{d} z_2;
\end{equation}
\item It is normalized by requiring that its periods about a basis of $A$-cycles on $C$ vanish.\footnote{$W_2^0(p_1,p_2)$ has also been called \emph{Bergman kernel} in the literature. It is the second order derivative of the $\log$ of the prime-form on $C$ \cite{Fay:1970}.}
\end{itemize}
\end{defn}

Having now defined the main ingredients, we can introduce the Eynard-Orantin recursion, following \cite{Eynard:2007, Eynard:2008}.

\subsubsection{The Eynard-Orantin topological recursion}

Let $\{ W^g_n \}$ be an infinite sequence of meromorphic differentials $W^g_n(p_1, \ldots, p_n)$ for all integers $g\geq 0$ and $n > 0$ satisfying the condition $2g - 2 + n \geq 0$. We say that the differentials with $2g-2+n > 0$ are \emph{stable}; $W_2^0(p_1, p_2)$ is the only unstable differential.

Let us introduce the shorthand notation $S = \{p_1, \ldots, p_{n} \}$. Then:
\begin{defn}\label{d:recursion}
We say that the meromorphic differentials $W_n^g$ satisfy the \emph{Eynard-Orantin topological recursion} (for $x$ with a single ramification point $a$) if:
\begin{equation}
W^g_{n+1}(p_0, S) =  \underset{q=a}{\text{Res}}\,  K(p_0, q) \Big(W^{g-1}_{n+2} (q, s(q), S) +\sum_{\substack{g_1 + g_2 = g \\ I \sqcup J = S}} W^{g_1}_{|I|+1}(q, I) W^{g_2}_{|J|+1}(s(q), J) \Big),
\end{equation}
where $K(p_0,q)$ is the \emph{Eynard kernel} defined below. The recursion here is on the integer $2g - 2 + n$, which is why it is called a topological recursion. The initial condition of the recursion is given by the unstable $W_2^0(p_1,p_2)$ defined above.
\end{defn}

\begin{defn}\label{d:kernel}
The \emph{Eynard kernel} $K(p_0,q)$ is defined, in local coordinate $q$ near $a$, by
\begin{equation}\label{eq:eynard}
K(p_0,q) = \frac{1}{2} \frac{\int_q^{s(q)} W^0_2(p_0,q')}{\omega(q) - \omega(s(q))},
\end{equation}
where $\omega(q)$ is the meromorphic one-form $\omega(q) = y(q) \mathrm{d} x(q)$ if the curve $C$ is in $\mathbb{C}^2$, and $\omega(q) = \log y(z) \frac{\mathrm{d} x(q)}{x(q)}$ if the curve $C$ is in $(\mathbb{C}^*)^2$. Here, $\frac{1}{\mathrm{d} x(q)}$ is the contraction operator with respect to the vector field $\left( \frac{\mathrm{d} x}{\mathrm{d} q} \right)^{-1} \frac{\partial}{\partial q}$.
\end{defn}

Definitions \ref{d:bergman}, \ref{d:recursion} and \ref{d:kernel} together define the Eynard-Orantin topological recursion for the curve $C$. (We refer the reader to \cite{Eynard:2007, Eynard:2008} for additional details and properties). 

\subsubsection{The $F_g$'s}

We can also extend the construction to $n = 0$ objects, $F_g := W_0^g$, which are just numbers. Those are the objects that we will concentrate on in this paper. To construct the $F_g$, $g \geq 2$ (the stable ones), we need an auxiliary equation. Let us first define
\begin{equation}\label{eq:primitive}
\Phi(q) = \int_0^q \omega(q'),
\end{equation}
which is the primitive of the one-form $\omega(q)$ for an arbitrary base point $0$. We then define:
\begin{defn}\label{d:fg}
The numbers $F_g$, $g \geq 2$, are constructed from the one-forms $W_1^g(p)$ by:
\begin{equation}\label{eq:fg}
F_g = \frac{(-1)^g}{2-2g}  \underset{q=a}{\text{Res}}\, \Phi(q) W_1^g(q).
\end{equation}
\end{defn}

\begin{rem}
Note that as in \cite{BS:2011} we introduce a factor of $(-1)^g$ in the definition of the  $F_g$ which is absent in the original formalism \cite{Eynard:2007}. As explained in \cite{BS:2011} (p.12), this factor is required to make precise comparison with results in Gromov-Witten theory due to different normalizations of the string coupling constant.
\end{rem}

\begin{rem}
As explained by Eynard and Orantin, Definition \ref{d:fg} is the $n=0$ extension of the relation:
\begin{equation}W_{n}^g(p_1,\dots,p_n)=\frac{1}{2-2g-n}\underset{q=a}{\text{Res}}\, \Phi(q)W_{n+1}^g(q,p_1,\dots,p_n).
\end{equation}  
\end{rem}

To summarize, given an affine curve $C$, the Eynard-Orantin topological recursion constructs an infinite tower of meromorphic differentials $W_n^g(p_1, \ldots, p_n)$ (Definition \ref{d:recursion}), and numbers $F_g := W_0^g$ (Definition \ref{d:fg}), for $g \geq 0$, $n >0$, satisfying the stability condition $2g-2+n > 0$. The recursion kernel is the Eynard kernel (Definition \ref{d:kernel}), and the initial condition of the recursion is the fundamental normalized bi-differential on $C$ (Definition \ref{d:bergman}).

\subsection{The remodeling conjecture}

\label{s:remodeling}

The remodeling conjecture  \cite{Bouchard:2009, Marino:2008} is an application of the Eynard-Orantin recursion in the world of Gromov-Witten theory and mirror symmetry. Roughly speaking, the statement of the conjecture is the following. We consider Gromov-Witten theory of a toric Calabi-Yau threefold $X$. The mirror theory lives on a family of complex curves, known as the \emph{mirror curve}, living in $(\mathbb{C}^*)^2$. We can apply the Eynard-Orantin recursion to the mirror curve to compute a tower of meromorphic differentials $W_g^n$ and free energies $F_g$. The statement of the remodeling conjecture is then that the $W_g^n$ are mapped by the open/closed mirror maps to the genus $g$, $n$-hole generating functions of open Gromov-Witten invariants, while the $F_g$ are mapped by the closed mirror map to the genus $g$ generating functions of closed Gromov-Witten invariants. For more details on this conjecture and on the geometry of mirror symmetry, we refer the reader to  \cite{Bouchard:2009, Bouchard:2010, BS:2011,Marino:2008} and subsequent work.

One aspect of the remodeling conjecture was clarified in \cite{BS:2011}: the issue of constant maps. The simplest Gromov-Witten invariants of $X$ are given by constant maps from closed Riemann surfaces. On the mirror side, those should correspond to the constant term in the free energies $F_g$ computed by the recursion.\footnote{Note that in this setup we are considering a family of curves, hence the $F_g$ are functions of the parameters of this family.} In \cite{BS:2011} it was argued that the remodeling conjecture also holds for constant maps. Two conjectures were formulated; the first conjecture stated that the $F_g$ obtained from the mirror curve to the simplest toric Calabi-Yau threefold $X = \mathbb{C}^3$ give the correct Gromov-Witten invariants for constant maps to $\mathbb{C}^3$; the second conjecture stated that the $F_g$ obtained from the mirror curve to a general toric Calabi-Yau threefold $X$ is equal to $\chi(X)$ times the $F_g$ of $\mathbb{C}^3$, as expected from Gromov-Witten theory. In this paper we prove the first conjecture about constant maps to $\mathbb{C}^3$.

\subsection{Gromov-Witten theory of $\mathbb{C}^3$}

\label{s:C3}

We now focus on a particular smooth curve, the mirror curve to Gromov-Witten theory of $\mathbb{C}^3$. We refer the reader to \cite{Bouchard:2009,BS:2011,Hori:2000} for more details on how to construct the mirror curve to a particular toric Calabi-Yau threefold. Note that for this geometry, the mirror curve is really a curve, and not a family of curves; hence the $F_g$ are really just numbers. This is because the only Gromov-Witten invariants of $\mathbb{C}^3$ correspond to constant maps, since there are no compact cycles in $\mathbb{C}^3$.

The (framed) mirror curve to $\mathbb{C}^3$ is given by the smooth complex curve
\begin{equation}\label{eq:mirrorc3}
C = \{ x - y^f + y^{f+1} = 0 \} \subset (\mathbb{C}^*)^2,
\end{equation}
where the framing $f \in \mathbb{Z}$ is taken to be generic (\emph{i.e.} not $0$ or $-1$ --- see \cite{BS:2011} for clarifications on the issue of framing). $C$ is a genus $0$ curve with three punctures.
\begin{rem}
Note that this curve is related to the curve in \cite{BS:2011} by the transformation $(x,y) \mapsto ((-1)^{f+1} x, -y)$, which does not change the $F_g$. Here we use the conventions of \cite{Chen:2009,Zhou:2009} so that we can use their theorem directly for the correlation functions. Henceforth we follow the notation of \cite{Chen:2009}.
\end{rem}

We introduce the parametrization
\begin{equation}\label{Para1}
y(t) = \frac{1}{f+1}\left( \frac{1}{t} + f \right), \qquad x(t) = y(t)^f(1-y(t)).
\end{equation}
The $x$-projection has a single ramification point, which is at $t = \infty$ (\emph{i.e.} $y=\frac{f}{f+1}$). 

Following \cite{Chen:2009,Zhou:2009} (based on the remodeling conjecture in \cite{Bouchard:2009, Bouchard:2009ii}), we introduce the following functions on $C$, for $0 \leq b \in \mathbb{Z}$:
\begin{align} \label{Phib}
\phi_b(t) =& \left ( x(t) \frac{\mathrm{d}}{\mathrm{d} x(t)} \right)^b \left(\frac{t-1}{f+1} \right)\\
=& \left(\frac{t (t-1) (f t+1)}{f+1} \frac{\mathrm{d}}{\mathrm{d}t}\right)^b \left(\frac{t-1}{f+1} \right).
\end{align}
Note that $\phi_b(t)$ is a polynomial in $t$ of degree $2b+1$. We also introduce
\begin{equation}
\phi_{-1}(t) = - \log \left( 1 + \frac{1}{f t} \right),
\end{equation}
so that
\begin{equation}
\phi_0(t) =  \left(\frac{t (t-1) (f t+1)}{f+1} \frac{\mathrm{d}}{\mathrm{d}t}\right) \phi_{-1}(t).
\end{equation}
Remark that
\begin{equation}\label{eq:logy}
\log y(t) = \log \left(\frac{1}{f t} + 1 \right) + \log \frac{f}{f+1}= - \phi_{-1}(t) + \log \frac{f}{f+1}.
\end{equation}
We also introduce the corresponding one-form:
\begin{equation}
\zeta_b(t) = \mathrm{d} \phi_b(t).
\end{equation}

The ``open'' statement of the remodeling conjecture relates the correlation functions $W^g_n$ constructed from the Eynard-Orantin recursion applied to the framed mirror curve \eqref{eq:mirrorc3} to generating functions of open Gromov-Witten invariants to $\mathbb{C}^3$. It is well known that those can be rewritten in terms of Hodge integrals, following the topological vertex formalism \cite{Aganagic:2005, Li:2009}. So to state the open part of the conjecture we need to introduce standard notation for Hodge integrals. 

Let $\overline{M}_{g,h}$ be the Deligne-Mumford compactification of the moduli space of complex algebraic curves of genus $g$ with $h$ marked points. Let $\mathbb{E}$ be the Hodge bundle on $\overline{M}_{g,h}$. We define the $\lambda_i$ classes as the Chern classes of $\mathbb{E}$:
\begin{equation}
\lambda_i = c_i(\mathbb{E}) \in H^{2i}(\overline{M}_{g,h}; \mathbb{Q}).
\end{equation}
As usual, we define the generating series:
\begin{equation}
\Lambda^\vee_g(t) = \sum_{i=0}^g (-1)^i \lambda_i t^{g-i}.
\end{equation}
We also define the $\psi_i$ class as the first Chern class of the cotangent line bundle $\mathbb{L}_i$ at the $i^{\text{th}}$ marked point:
\begin{equation}
\psi_i = c_1(\mathbb{L}_i) \in H^2(\overline{M}_{g,h}; \mathbb{Q}).
\end{equation}
Hodge integrals are intersection numbers of $\lambda_j$ and $\psi_i$ classes:
\begin{equation}
\langle \psi_1^{j_1} \cdots \psi_h^{j_h} \lambda_1^{k_1} \cdots \lambda_g^{k_g} \rangle := \int_{\overline{M}_{g,h}} \psi_1^{j_1} \cdots \psi_h^{j_h} \lambda_1^{k_1} \cdots \lambda_g^{k_g}.
\end{equation}
Of course, since the dimension of $\overline{M}_{g,h}$ is $3g-3+h$, the Hodge integrals are non-vanishing only if
\begin{equation}
j_1 + \ldots + j_h + \sum_{i=1}^g i k_i = 3g - 3 + h.
\end{equation}
In the following we also use Witten's notation for Hodge integrals:
\begin{equation}
\langle \tau_{b_1} \cdots \tau_{b_n} \cdots \rangle = \langle \psi_1^{b_1} \cdots \psi_n^{b_n} \cdots \rangle.
\end{equation}

We are now ready to state the ``open'' part of the remodeling conjecture, rewritten in terms of Hodge integrals. The ``open'' statement was proved in \cite{Chen:2009,Zhou:2009}:
\begin{thm}[Chen \cite{Chen:2009}, Zhou \cite{Zhou:2009}]\label{thm:CZ}
The correlation functions $W^g_n$ produced by the Eynard-Orantin recursion applied to the framed mirror curve to $\mathbb{C}^3$ \eqref{eq:mirrorc3} are given by the meromorphic differentials:\footnote{Note that we have an extra factor of $(-1)^{2g-2+n} = (-1)^n$ with respect to the formula in \cite{Chen:2009,Zhou:2009}; this is due to the fact that our Eynard kernel, Definition \ref{d:kernel}, has  a minus sign difference with the kernel used in \cite{Chen:2009,Zhou:2009}.}
\begin{equation}
W^g_n(t_1, \ldots, t_n) = (-1)^{g} (f(f+1))^{n-1} \sum_{b_1, \ldots, b_n \geq 0} \langle \tau_{b_1} \cdots \tau_{b_n} \Gamma_g(f) \rangle \prod_{i=1}^n \zeta_{b_i} (t_i),
\end{equation}
where we introduced the notation
\begin{equation}
\Gamma_g(f) = \Lambda^\vee_g(1) \Lambda^\vee_g(f) \Lambda^\vee_g (-f-1).
\end{equation}
\end{thm}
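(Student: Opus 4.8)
The plan is to prove the formula by induction on the stable invariant $2g-2+n \geq 1$, showing that the proposed right-hand side satisfies the same Eynard-Orantin recursion (Definition \ref{d:recursion}) with the same initial data as the differentials $W^g_n$ themselves. The first step is to pin down the pole structure. From the general theory of the recursion, every stable $W^g_n$ is, in each variable $t_i$, a meromorphic differential whose only poles sit at the ramification point $t=\infty$ and which has vanishing residue there. The one-forms $\zeta_b(t)=\mathrm{d}\phi_b(t)$, with $\phi_b$ a polynomial of degree $2b+1$ as in \eqref{Phib}, furnish as $b$ ranges over $\mathbb{Z}_{\geq 0}$ a basis for the admissible principal parts at $t=\infty$. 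Hence each $W^g_n$ has a unique finite expansion $W^g_n=\sum_{\vec b}c^{g,n}_{\vec b}\prod_i\zeta_{b_i}(t_i)$, and the theorem becomes the identity $c^{g,n}_{\vec b}=(-1)^g(f(f+1))^{n-1}\langle\tau_{b_1}\cdots\tau_{b_n}\Gamma_g(f)\rangle$.

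Next I would convert the recursion into a closed recursion for the coefficients $c^{g,n}_{\vec b}$. This requires the local data of the kernel at $t=\infty$: I would expand $\omega=\log y\,\mathrm{d}x/x$, the deck transformation $s$, and the Bergman kernel $W^0_2$ in a local coordinate, and then evaluate $\underset{q=a}{\mathrm{Res}}\,K(p_0,q)[\,\cdots]$ against products of $\zeta_b$'s. The expected outcome is that the residue against the Eynard kernel implements the Virasoro/DVV index-shift rule on the $\psi$-indices — with the $W^{g-1}_{n+2}$ term producing a convolution in the two node-indices and the factorized term producing the corresponding product — weighted by an explicit power of $f(f+1)$ traceable to the prefactor in \eqref{Phib}. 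This step is a finite local computation and, while delicate in its bookkeeping of signs and framing factors, is structurally routine.

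On the Gromov-Witten side I would show that the triple-Hodge integrals $\langle\tau_{b_1}\cdots\tau_{b_n}\Gamma_g(f)\rangle$ satisfy exactly this transformed recursion; this is the crux and the main obstacle. It combines two ingredients: the DVV/Virasoro recursion for pure $\psi$-intersections, which supplies the index shifts, and the restriction of the Hodge bundle $\mathbb{E}$ to the boundary divisors of $\overline{M}_{g,n}$, which controls how $\Gamma_g(f)$ splits. At a separating node $\mathbb{E}$ splits as an external direct sum, giving $\Lambda^\vee_{g_1}(u)\Lambda^\vee_{g_2}(u)$ for each argument $u$; at a non-separating node the rank of $\mathbb{E}$ drops by one, and Mumford's relation $\Lambda^\vee_g(u)\Lambda^\vee_g(-u)=(-1)^g u^{2g}$ governs the resulting class together with the $\psi$-classes at the two branches. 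The essential point is that the three arguments $1,f,-f-1$ of the factors in $\Gamma_g(f)$ sum to zero — precisely the Calabi-Yau condition for $\mathbb{C}^3$ — and it is this vanishing that makes the boundary contributions of $\Gamma_g(f)$ reassemble into exactly the $W^{g-1}_{n+2}$ and $\sum_{g_1+g_2=g,\,I\sqcup J=S}W^{g_1}W^{g_2}$ structure of the recursion. Verifying this compatibility, with all signs and powers of $f(f+1)$ accounted for, is where the real work lies.

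Finally I would close the induction by checking the two base cases with $2g-2+n=1$. For $(g,n)=(0,3)$ one has $\Gamma_0(f)=1$ and $\dim\overline{M}_{0,3}=0$, so only $\langle\tau_0\tau_0\tau_0\rangle=1$ survives, matching $W^0_3$ computed directly from $W^0_2$ with prefactor $(f(f+1))^2$. For $(g,n)=(1,1)$ the integral reduces to the known values $\langle\tau_1\rangle=\tfrac{1}{24}$ and $\int_{\overline{M}_{1,1}}\lambda_1=\tfrac{1}{24}$ paired against the degree-$0$ and degree-$1$ parts of $\Gamma_1(f)=(1-\lambda_1)(f-\lambda_1)(-f-1-\lambda_1)$, which I would match against the recursion output for $W^1_1$. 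As an alternative to the boundary-restriction analysis of the third step, one could instead invoke Eynard's general intersection-number formula for topological-recursion invariants (the route indicated in Appendix \ref{appA}), identify the local spectral-curve data of \eqref{Para1} at $t=\infty$, and read off that the associated intersection class is precisely $\Gamma_g(f)$; this bypasses the node analysis at the cost of importing that general formula.
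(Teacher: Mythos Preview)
The paper does not prove this theorem itself; it imports the result from \cite{Chen:2009,Zhou:2009} and only records that their argument proceeds ``by using the symmetrized cut-and-join equation as in the mathematical theory of the topological vertex'' \cite{Li:2009}, in the style of \cite{Eynard:2009}. Your overall architecture---induction on $2g-2+n$, show both sides satisfy the same recursion with the same initial data, verify $(g,n)=(0,3)$ and $(1,1)$ by hand---is indeed the shape of the Chen--Zhou proof, and your base cases and use of the $\zeta_b$-basis are accurate.

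Where you diverge is in the mechanism on the Hodge-integral side. You propose to obtain the needed recursion for $\langle\tau_{b_1}\cdots\tau_{b_n}\Gamma_g(f)\rangle$ from the DVV/Virasoro relation for pure $\psi$-intersections combined with the boundary restriction of $\mathbb{E}$. This is not how the cited proofs proceed, and as written it has a gap: the DVV recursion governs $\psi$-integrals on $\overline{M}_{g,n}$ without Hodge classes, and inserting $\Gamma_g(f)$ does not simply dress the DVV kernel---the index-shift structure you need does not come for free from DVV plus splitting of $\Lambda^\vee$ at the nodes. What actually supplies the recursion for these triple-Hodge integrals is the (symmetrized) cut-and-join equation satisfied by the Mari\~no--Vafa/topological-vertex generating function \cite{Li:2009,LLZ:2006}; after the change of variables encoded in the $\phi_b$, its Laplace transform becomes exactly the Eynard--Orantin recursion for the curve \eqref{eq:mirrorc3}. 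That identity is the key lemma you are missing, and it is not a consequence of DVV. Your alternative route via Eynard's general formula \cite{Eynard:2011} is legitimate and is precisely what the paper spells out in Appendix~\ref{appA}; if you take that path, the content of the proof becomes the identification $\xi_0^{(b)}=(-1)^b\sqrt{2f(f+1)}\,\phi_b$ carried out there.
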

This theorem, first conjectured in \cite{Bouchard:2009, Bouchard:2009ii, Marino:2008}, was proved in \cite{Chen:2009, Zhou:2009} by using the symmetrized cut-and-join equation as in the mathematical theory of the topological vertex in Gromov-Witten theory \cite{Li:2009}. The line of reasoning is similar to what was used by Eynard, Mulase and Safnuk \cite{Eynard:2009} to prove the remodeling conjecture for Hurwitz numbers \cite{Bouchard:2009ii}.
Note that Theorem \ref{thm:CZ} is also a consequence of a more general formalism recently developed by Eynard in \cite{Eynard:2011}. We will say more about that in the Appendix \ref{appA}.

In this paper we complete the proof of the remodeling conjecture for $\mathbb{C}^3$ by proving that the free energies $F_g$ also produce the correct Gromov-Witten invariants, namely closed Gromov-Witten invariants for constant maps to $\mathbb{C}^3$.

\section{The free energies for $\mathbb{C}^3$}

\label{s:proof}

The main result of this paper is the following theorem:
\begin{thm}\label{thm:main}
The free energies $F_g$, $g \geq 2$ (defined in \eqref{eq:fg}) produced by the Eynard-Orantin recursion applied to the framed mirror curve to $\mathbb{C}^3$ \eqref{eq:mirrorc3} are given by:
\begin{equation}
F_g = (-1)^g \frac{|B_{2g}| |B_{2g-2}|}{2(2g) (2g-2) (2g-2)!},
\end{equation}
where $B_n$ is the $n^{\text{th}}$ Bernoulli number.
This is the Gromov-Witten invariant for constant maps from genus $g$ Riemann surfaces to $\mathbb{C}^3$, as proved in \cite{Faber:2000}.
\end{thm}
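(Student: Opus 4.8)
The plan is to start from the Chen--Zhou formula (Theorem~\ref{thm:CZ}) specialized to $n=1$, which gives $W_1^g(t) = (-1)^g \sum_{b \geq 0} \langle \tau_b \Gamma_g(f) \rangle \zeta_b(t)$, and plug it into the definition \eqref{eq:fg} of $F_g$. Thus $F_g = \frac{1}{2-2g} \sum_{b \geq 0} \langle \tau_b \Gamma_g(f)\rangle \operatorname{Res}_{q=a} \Phi(q)\, \zeta_b(q)$, where $a$ corresponds to $t=\infty$ and $\Phi(q)$ is the primitive of $\omega = \log y \, \tfrac{\mathrm{d}x}{x}$. The first task is to compute the residue pairings $\operatorname{Res}_{t=\infty} \Phi(t)\, \zeta_b(t)$. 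Using \eqref{eq:logy}, $\omega = (-\phi_{-1}(t) + \text{const})\, \tfrac{\mathrm{d}x}{x}$, and one should integrate to get $\Phi$ and expand near $t=\infty$; since $\phi_b$ is polynomial in $t$ of degree $2b+1$, the one-forms $\zeta_b = \mathrm{d}\phi_b$ have poles at $t=\infty$ of controlled order, so each residue is an explicit rational function of $f$. In fact I expect only finitely many $b$ contribute for each fixed $g$ because $\langle \tau_b \Gamma_g(f)\rangle$ vanishes unless $b \le 3g-2$ (dimension constraint on $\overline{M}_{g,1}$).

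Next I would reorganize the resulting sum over $b$ into a single Hodge integral over $\overline{M}_{g,1}$ against a universal class. The hope is that $\sum_b \big(\operatorname{Res}_{t=\infty}\Phi\,\zeta_b\big)\, \tau_b$ collapses, via the string/dilaton equation or a direct generating-function identity for the $\phi_b$, into something like $\frac{1}{1-\psi}$ or a combination producing $\langle \frac{\Lambda_g^\vee(1)\Lambda_g^\vee(f)\Lambda_g^\vee(-f-1)}{1-\psi} \rangle$-type integrals. This is exactly the shape of the Mariño--Vafa / Faber--Pandharipande Hodge integral evaluations. Concretely, I would aim to show $F_g$ equals (up to explicit constants in $f$) the integral $\int_{\overline{M}_{g,1}} \frac{\Lambda_g^\vee(1)\Lambda_g^\vee(f)\Lambda_g^\vee(-f-1)}{1-\psi}$ or the closely related $\int_{\overline{M}_g}\Lambda_g^\vee(1)\Lambda_g^\vee(f)\Lambda_g^\vee(-f-1)$, the latter being evaluated by Faber--Pandharipande \cite{Faber:2000} to give precisely $\frac{|B_{2g}||B_{2g-2}|}{2(2g)(2g-2)(2g-2)!}$ when one sets the appropriate relation among the three arguments. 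Since the left side $F_g$ is manifestly $f$-independent (the curve \eqref{eq:mirrorc3} has $F_g$ independent of framing, as noted in the remarks), any apparent $f$-dependence on the right must cancel, which is a useful internal consistency check and may itself force the answer.

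A cleaner alternative I would pursue in parallel: use the $n=0$ reduction remark, $F_g = \frac{1}{2-2g}\operatorname{Res}\,\Phi\, W_1^g$, together with known closed-form generating functions. The one-point Hodge integrals $\langle \tau_b \Lambda_g^\vee(1)\Lambda_g^\vee(f)\Lambda_g^\vee(-f-1)\rangle$ have a well-known generating function (this is essentially the genus-$g$ one-holed topological vertex / the ``$\mathbb{C}^3$ partition function''), and summing against the residues of $\zeta_b$ should reproduce the $\lambda$-expansion of $M(q)^{1/2}$ directly. The MacMahon function identity $M(q)^{1/2} = \exp(\sum \lambda^{2g-2}F_g)$ with the stated $F_g$ is classical, so matching to it finishes the proof.

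The main obstacle, I expect, is the combinatorial bookkeeping in the second step: carefully evaluating $\operatorname{Res}_{t=\infty}\Phi(t)\,\zeta_b(t)$ for all $b$ and recognizing the resulting sum as a single recognizable Hodge integral (or as the known one-point vertex generating function). The residue computation involves the logarithmic primitive $\Phi$ of $\omega$, and getting the coefficients right — including the powers of $f(f+1)$ and the sign $(-1)^g$, and confirming the claimed $f$-independence — is where the real work lies. Once the $F_g$ is expressed as the Faber--Pandharipande integral $\int_{\overline{M}_g}\Lambda_g^\vee(1)\Lambda_g^\vee(f)\Lambda_g^\vee(-f-1)$ (or its one-pointed cousin), the evaluation \eqref{Final} is quoted from \cite{Faber:2000} and the proof is complete.
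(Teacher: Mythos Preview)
Your overall strategy matches the paper's: start from Chen--Zhou's $W_1^g$, insert it into \eqref{eq:fg}, evaluate the residue pairings, and reduce to a Faber--Pandharipande Hodge integral on $\overline{M}_g$. However, you are missing the one observation that makes the argument short, and you therefore overestimate the difficulty of the ``combinatorial bookkeeping''.

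The key point is that the residue pairing kills every $b$ except $b=1$. After an integration by parts, $F_g = -\frac{(-1)^g}{2-2g}\,\underset{t=\infty}{\mathrm{Res}}\,\omega(t)\,\big(\textstyle\int W_1^g\big)$, with the primitive $\int W_1^g = (-1)^g\sum_{b\ge 0}\langle\tau_b\Gamma_g(f)\rangle\,\phi_b(t)$. Since $\log y = -\phi_{-1} + \mathrm{const}$ and by construction $\phi_b\,\tfrac{\mathrm{d}x}{x} = \mathrm{d}\phi_{b-1}$, the relevant residue is $\underset{t=\infty}{\mathrm{Res}}\,\phi_{-1}(t)\,\mathrm{d}\phi_{b-1}(t)$. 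For $b=0$ this is the residue of an exact form; for $b\ge 2$ another integration by parts gives $-\underset{t=\infty}{\mathrm{Res}}\,\phi_0\,\mathrm{d}\phi_{b-2}$, a polynomial one-form with zero residue. Only $b=1$ survives, yielding $\frac{1}{f(f+1)}$, and hence $F_g = \frac{1}{(2-2g)f(f+1)}\langle\tau_1\Gamma_g(f)\rangle$. There is no sum over $b$ to reorganize, no generating-function identity for the $\phi_b$ is needed, and the finiteness you invoke from the dimension constraint $b\le 3g-2$ is irrelevant.

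From there the dilaton equation (which you correctly anticipate) gives $\langle\tau_1\Gamma_g(f)\rangle = (2g-2)\langle\Gamma_g(f)\rangle_{\overline{M}_g}$, and a short computation with Mumford's relation $\Lambda_g^\vee(t)\Lambda_g^\vee(-t)=(-1)^g t^{2g}$ extracts the degree $3g-3$ part of $\Gamma_g(f)$ as $(-1)^{g-1}f(f+1)\cdot\tfrac12\lambda_{g-1}^3$; the $f(f+1)$ cancels and Faber--Pandharipande's evaluation of $\langle\lambda_{g-1}^3\rangle$ finishes the proof. One small correction: $f$-independence of $F_g$ is not ``manifest'' from the curve --- it is a \emph{consequence} of the theorem (symplectic invariance), so you should not use it as an input or consistency check in the argument itself. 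Your alternative route via matching to the $\lambda$-expansion of $M(q)^{1/2}$ would be circular, since that identification is precisely what the theorem establishes.
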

This completes the proof of the remodeling conjecture for $\mathbb{C}^3$. This theorem was conjectured in \cite{BS:2011}, where it was shown to hold computationally up to genus $7$. Here we provide a proof based on the results of Chen and Zhou \cite{Chen:2009, Zhou:2009}.

\begin{rem}
Note that it follows from Theorem \ref{thm:main} that the free energies are ``framing-independent'', \emph{i.e.} do not depend on the framing $f$, while the correlation functions in Theorem \ref{thm:CZ} do depend on $f$. This result confirms ``symplectic invariance'' (as clarified in \cite{BS:2011}) of the free energies (but not of the correlation functions), as expected from the work of Eynard and Orantin \cite{Eynard:2007,Eynard:2008}. 
\end{rem}

Let us start by proving the intermediate lemma:
\begin{lem}\label{lem}
The free energies $F_g$,  $g \geq 2$ (defined in \eqref{eq:fg}) produced by the Eynard-Orantin recursion applied to the framed mirror curve to $\mathbb{C}^3$ \eqref{eq:mirrorc3} are given in terms of Hodge integrals by:
\begin{equation}
F_g =  \frac{1}{(2-2g) f(f+1)} \langle \psi_1 \Gamma_g(f) \rangle.
\end{equation}
\end{lem}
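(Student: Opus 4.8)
The plan is to insert the Chen--Zhou formula for $W_1^g$ directly into the defining residue formula \eqref{eq:fg} for $F_g$ and extract the single Hodge integral that survives. By Theorem~\ref{thm:CZ} with $n=1$ we have
\begin{equation}
W_1^g(t) = (-1)^g \sum_{b \geq 0} \langle \tau_b \,\Gamma_g(f) \rangle \,\zeta_b(t),
\end{equation}
since the prefactor $(f(f+1))^{n-1}$ is $1$ when $n=1$. Plugging this into \eqref{eq:fg} gives
\begin{equation}
F_g = \frac{(-1)^g}{2-2g}\,(-1)^g \sum_{b\geq 0} \langle \tau_b \,\Gamma_g(f)\rangle \,\underset{t=\infty}{\text{Res}}\, \Phi(t)\,\zeta_b(t)
= \frac{1}{2-2g}\sum_{b\geq 0} \langle \tau_b\, \Gamma_g(f)\rangle \,\underset{t=\infty}{\text{Res}}\, \Phi(t)\,\mathrm{d}\phi_b(t).
\end{equation}
So the whole problem reduces to evaluating the residues $R_b := \underset{t=\infty}{\text{Res}}\,\Phi(t)\,\mathrm{d}\phi_b(t)$ at the ramification point $t=\infty$, and showing that only $b=1$ contributes, with $R_1 = f(f+1)$ (up to sign), which would yield exactly the claimed $F_g = \frac{1}{(2-2g)f(f+1)}\langle \psi_1 \Gamma_g(f)\rangle$ after accounting for $\langle\tau_1\Gamma_g(f)\rangle = \langle \psi_1\Gamma_g(f)\rangle$.

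The key computational step is the local analysis at $t=\infty$. First I would recall that $\omega(t) = \log y(t)\,\tfrac{\mathrm{d}x(t)}{x(t)}$, so $\Phi(t) = \int_0^t \omega$. Using \eqref{eq:logy}, $\log y(t) = -\phi_{-1}(t) + \log\tfrac{f}{f+1}$, and since $\phi_0 = \left(\tfrac{t(t-1)(ft+1)}{f+1}\tfrac{\mathrm{d}}{\mathrm{d}t}\right)\phi_{-1}$ is precisely the operator $x\tfrac{\mathrm{d}}{\mathrm{d}x}$ applied to $\phi_{-1}$, we have $\mathrm{d}\phi_{-1} = \phi_0(t)\,\tfrac{\mathrm{d}x}{x}$, hence
\begin{equation}
\omega(t) = -\mathrm{d}\phi_{-1}(t) + \log\tfrac{f}{f+1}\,\tfrac{\mathrm{d}x}{x} = -\phi_0(t)\,\tfrac{\mathrm{d}x(t)}{x(t)} + \log\tfrac{f}{f+1}\,\tfrac{\mathrm{d}x(t)}{x(t)}.
\end{equation}
Then I would integrate by parts inside the residue: $\underset{t=\infty}{\text{Res}}\,\Phi\,\mathrm{d}\phi_b = -\underset{t=\infty}{\text{Res}}\,\phi_b\,\mathrm{d}\Phi = -\underset{t=\infty}{\text{Res}}\,\phi_b(t)\,\omega(t)$ (the boundary term vanishes because $\phi_b \mathrm{d}\Phi$ and $\Phi \mathrm{d}\phi_b$ sum to an exact form $\mathrm{d}(\phi_b\Phi)$ whose residue is zero). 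This converts the problem to computing $\underset{t=\infty}{\text{Res}}\,\phi_b(t)\,\phi_0(t)\,\tfrac{\mathrm{d}x(t)}{x(t)}$, a residue of an explicit rational one-form in $t$ — the $\log\tfrac{f}{f+1}$ piece integrates against $\mathrm{d}\phi_b$ to give a residue of an exact times constant, which vanishes. I would set $u = 1/t$ to work in a local coordinate vanishing at the ramification point, expand $x(t)$, $\phi_0(t)$, and $\phi_b(t)$ as Laurent series in $u$, and read off the coefficient. Since $\phi_b$ is a polynomial of degree $2b+1$ and $\phi_0\tfrac{\mathrm{d}x}{x}$ has a controlled pole order at $u=0$, a degree count shows $R_b = 0$ for all $b \neq 1$, and a direct low-order expansion gives $R_1$.

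The main obstacle I anticipate is bookkeeping the signs and the precise normalization constant in the $R_1$ computation — in particular pinning down that $R_1 = -f(f+1)$ so that the overall sign works out to the $+\frac{1}{(2-2g)f(f+1)}$ in the statement rather than its negative, and verifying that the base-point ambiguity in $\Phi$ (the ``$0$'' in $\Phi(q) = \int_0^q \omega$) genuinely drops out of the residue. The base-point independence follows because changing the base point shifts $\Phi$ by a constant $c$, and $\underset{t=\infty}{\text{Res}}\, c\,\mathrm{d}\phi_b = 0$ since $\mathrm{d}\phi_b$ is exact; I would state this explicitly. Once $R_b = -f(f+1)\,\delta_{b,1}$ is established, the lemma follows immediately from the displayed reduction above together with the dimension constraint forcing $\langle \tau_b \Gamma_g(f)\rangle$ to be the $b=1$ term (noting $\dim \overline{M}_{g,1} = 3g-2$ and $\Gamma_g(f)$ has degree $3g$, so $\langle \tau_b\Gamma_g(f)\rangle$ requires $b=1$ anyway — a useful cross-check that the residue analysis and the Hodge integral dimension count agree).
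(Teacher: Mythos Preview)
Your overall strategy matches the paper's: plug Chen--Zhou's $W_1^g$ into the residue definition of $F_g$, integrate by parts, and show a single $b$ survives. But two concrete errors derail the execution.

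First, the displayed formula for $\omega$ is wrong. From $\log y = -\phi_{-1} + \log\tfrac{f}{f+1}$ one gets
\[
\omega \;=\; \log y\,\frac{\mathrm{d}x}{x} \;=\; -\,\phi_{-1}(t)\,\frac{\mathrm{d}x}{x} \;+\; \log\tfrac{f}{f+1}\,\frac{\mathrm{d}x}{x},
\]
not $\omega = -\,\mathrm{d}\phi_{-1} + \ldots = -\,\phi_0\,\tfrac{\mathrm{d}x}{x} + \ldots$ as you wrote. You have conflated $\phi_{-1}\tfrac{\mathrm{d}x}{x}$ with $\mathrm{d}\phi_{-1}=\phi_0\tfrac{\mathrm{d}x}{x}$. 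With your incorrect $\omega$, after your integration by parts and the shift $\phi_b\tfrac{\mathrm{d}x}{x}=\mathrm{d}\phi_{b-1}$, the residue becomes $\mathrm{Res}_{t=\infty}\,\phi_0\,\mathrm{d}\phi_{b-1}$: for $b\geq 1$ this is a residue of a polynomial one-form (zero), and for $b=0$ it equals $-\tfrac{1}{f(f+1)}$. So your computation would isolate $b=0$, not $b=1$, producing $\langle\tau_0\Gamma_g(f)\rangle_{g,1}$ --- which in fact vanishes, since the $\lambda$-classes are pulled back along the forgetful map $\overline{M}_{g,1}\to\overline{M}_g$. The paper uses the correct $\omega$ and the same index shift to reduce to $\mathrm{Res}_{t=\infty}\,\phi_{-1}\,\mathrm{d}\phi_{b-1}$; it is the logarithm in $\phi_{-1}$ that makes $b=1$ the unique survivor, with value $\tfrac{1}{f(f+1)}$ (not $\pm f(f+1)$ as you state --- note that $R_1=f(f+1)$ would not reproduce the claimed $F_g$ anyway).

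Second, your closing ``dimension cross-check'' is false. The class $\Gamma_g(f)=\Lambda^\vee_g(1)\Lambda^\vee_g(f)\Lambda^\vee_g(-f-1)$ is not homogeneous of degree $3g$; it has components in every degree $0,\ldots,3g$. Hence $\langle\tau_b\,\Gamma_g(f)\rangle_{g,1}$ is not forced by dimension alone to vanish for $b\neq 1$, and the residue analysis is genuinely doing the work here, not merely confirming a foregone conclusion.
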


\begin{proof}
What we need to do is evaluate the residue in \eqref{eq:fg} for our particular curve \eqref{eq:mirrorc3}:
\begin{equation}
F_g = \frac{(-1)^g}{2-2g}  \underset{q=a}{\text{Res}}\, \Phi(q) W_1^g(q),
\end{equation}
with
\begin{equation}
\Phi(q) = \int_0^q \log y(q') \frac{\mathrm{d} x(q')}{x(q')} = \int \omega
\end{equation}
an arbitrary primitive of the one-form $\omega = \log y \frac{\mathrm{d} x}{x}$.

First, note that we can integrate by part, and rewrite instead
\begin{align}
F_g =& \frac{(-1)^g}{2-2g}  \underset{q=a}{\text{Res}} \left( \left( \int \omega(q) \right) W_1^g(q) \right)\\
=& -  \frac{(-1)^g}{2-2g}  \underset{q=a}{\text{Res}}\left(\omega(q) \left( \int W_1^g(q) \right) \right).
\end{align}
Now according to Theorem \ref{thm:CZ} of Chen and Zhou, we know that the one-point correlation functions are given by
\begin{equation}
W_1^g (t) = (-1)^{g} \sum_{b \geq 0} \langle \tau_b \Gamma_g(f) \rangle \mathrm{d} \phi_b(t).
\end{equation}
Hence the arbitrary primitives can be taken to be
\begin{equation}
\int W_1^g(t) =  (-1)^{g} \sum_{b \geq 0} \langle \tau_b \Gamma_g(f) \rangle \phi_b(t).
\end{equation}

Using the notation introduced previously, we can also write
\begin{equation}
\log y(t) \frac{\mathrm{d} x(t)}{x(t)} = \left( - \phi_{-1}(t) + \log \left ( \frac{f}{f+1} \right) \right) \frac{\mathrm{d} x(t)}{x(t)}.
\end{equation}
Therefore, noting that the ramification point is at $t = \infty$, what we want to evaluate is the residue
\begin{equation}
F_g =   \frac{1}{2g-2}  \sum_{b \geq 0} \langle \tau_b \Gamma_g(f) \rangle \underset{t=\infty}{\text{Res}} \left( \left( - \phi_{-1}(t) + \log \left ( \frac{f}{f+1} \right) \right) \frac{\mathrm{d} x(t)}{x(t)}  \phi_b(t)\right).
\end{equation}

By definition, for $b \geq 0$ we have that
\begin{equation}
\phi_b(t) = x(t) \frac{\mathrm{d}}{\mathrm{d} x(t)} \phi_{b-1}(t),
\end{equation}
hence
\begin{equation}
F_g =   \frac{1}{2g-2}  \sum_{b \geq 0} \langle \tau_b \Gamma_g(f) \rangle \underset{t=\infty}{\text{Res}} \left( \left( - \phi_{-1}(t) + \log \left ( \frac{f}{f+1} \right) \right) \mathrm{d}\phi_{b-1}(t)\right).
\end{equation}
Since $\phi_{b-1}(t)$ is meromorphic at $t=\infty$ (recall that for $b \geq 1$ it is a polynomial in $t$, while for $b=0$ it vanishes at $t=\infty$), the residue of its differential is necessarily zero. Hence we can forget about the terms involving $ \log \left ( \frac{f}{f+1} \right)$, whose residues all vanish. We get:
\begin{equation}\label{eq:fgtemp}
F_g =    \frac{1}{2-2g}  \sum_{b \geq 0} \langle \tau_b \Gamma_g(f) \rangle \underset{t=\infty}{\text{Res}} \left(  \phi_{-1}(t)  \mathrm{d}\phi_{b-1}(t)\right).
\end{equation}
So we need to evaluate the residue
\begin{equation}
\underset{t=\infty}{\text{Res}} \left(  \phi_{-1}(t)  \mathrm{d}\phi_{b-1}(t)\right)
\end{equation}
for all $b \geq 0$.

For $b=0$, we have
\begin{equation}
\underset{t=\infty}{\text{Res}} \left(  \phi_{-1}(t)  \mathrm{d}\phi_{-1}(t)\right) = \frac{1}{2}\underset{t=\infty}{\text{Res}} \left(\mathrm{d} \left( \phi_{-1}(t)^2\right) \right) = 0,
\end{equation}
Since $\phi_{-1}(t)$ is zero at $t=\infty$. 

For $b=1$, we have:
\begin{align}
\underset{t=\infty}{\text{Res}} \left(  \phi_{-1}(t)  \mathrm{d}\phi_{0}(t)\right) =&-\underset{t=\infty}{\text{Res}} \left(\log \left(1+\frac{1}{ft}\right)\frac{1}{f+1} \right) \mathrm{d} t\\
=& \frac{1}{f(f+1)}.
\end{align}

For $b \geq 2$, we want to evaluate
\begin{align}
R_b := \underset{t=\infty}{\text{Res}} \left(  \phi_{-1}(t)  \mathrm{d}\phi_{b-1}(t)\right) =& - \underset{t=\infty}{\text{Res}} \left(  \mathrm{d }\phi_{-1}(t)  \phi_{b-1}(t)\right)\\
=& - \underset{t=\infty}{\text{Res}} \left(\frac{1}{t(1+f t)}   \phi_{b-1}(t)\right) \mathrm{d} t,
\end{align}
where we used integration by parts. Now we know that
\begin{align}
 \phi_{b-1}(t) =&  \left(\frac{t (t-1) (f t+1)}{f+1} \frac{\mathrm{d}}{\mathrm{d}t}\right)^{b-1} \left(\frac{t-1}{f+1} \right)\\
=&\frac{t (t-1) (f t+1)}{f+1} \frac{\mathrm{d}}{\mathrm{d}t}\left[\left(\frac{t (t-1) (f t+1)}{f+1} \frac{\mathrm{d}}{\mathrm{d}t}\right)^{b-2} \left(\frac{t-1}{f+1} \right)\right],
\end{align}
hence our residue becomes
\begin{align}
R_b =& - \underset{t=\infty}{\text{Res}} \left(\frac{t-1}{f+1} \frac{\mathrm{d}}{\mathrm{d} t} \left[\left(\frac{t (t-1) (f t+1)}{f+1} \frac{\mathrm{d}}{\mathrm{d}t}\right)^{b-2} \left(\frac{t-1}{f+1}  \right)\right] \right)\mathrm{d} t\\
=& - \underset{t=\infty}{\text{Res}} \left( \phi_0(t) \frac{\mathrm{d} \phi_{b-2}(t)}{\mathrm{d}t}  \right) \mathrm{d} t.
\end{align}
But both $\phi_0(t)$ and $\phi_{b-2}(t)$ are polynomial in $t$, and so the one-form $\phi_0(t) \mathrm{d} \phi_{b-2}(t)$  is holomorphic everywhere on $\mathbb{C}_\infty$ except at $t=\infty$, hence by the residue theorem its residue at $t=\infty$ must vanish. Therefore we get that $R_b = 0$ for all $ b \geq 2$.

Putting all this together, we get that the sum in \eqref{eq:fgtemp} collapses onto the $b=1$ term:
\begin{align}
F_g =&    \frac{1}{2-2g}  \langle \tau_1 \Gamma_g(f) \rangle \underset{t=\infty}{\text{Res}} \left(  \phi_{-1}(t)  \mathrm{d}\phi_{0}(t)\right) \\
=& \frac{1}{(2-2g)f(f+1)}  \langle \tau_1 \Gamma_g(f) \rangle,
\end{align}
which is the statement of the lemma.
\end{proof}

To prove Theorem \ref{thm:main}, all that remains is to evaluate the Hodge integral.

\begin{proof}[Proof of Theorem \ref{thm:main}]

By Lemma \ref{lem} we know that
\begin{equation}
F_g =  \frac{1}{(2-2g)f(f+1)}  \langle \tau_1 \Gamma_g(f) \rangle.
\end{equation}
The well known dilaton equation for Hodge integrals \cite{Witten:1991} tells us that
\begin{equation}
\langle \tau_1 \prod_{i=1}^n \tau_{a_i} \rangle_g = (2g-2+n) \langle \prod_{i=1}^n \tau_{a_i} \rangle_g.
\end{equation}
The same result holds when $\lambda$ classes also appear in the Hodge integrals (see for instance \cite{LLZ:2006}). In our case, the dilaton equation implies that
\begin{equation}
F_g = \frac{1}{(2-2g)f(f+1)} (2g-2) \langle \Gamma_g(f) \rangle = -\frac{1}{f(f+1)} \langle \Gamma_g(f) \rangle.
\end{equation}
So we need to evaluate the Hodge integral
\begin{equation}
 \langle \Gamma_g(f) \rangle = \langle \Lambda^\vee_g(1) \Lambda^\vee_g(f) \Lambda^\vee_g(-f-1) \rangle. 
\end{equation}
By definition, we have
\begin{equation}
\Lambda^\vee_g(1) = \sum_{i=0}^g (-1)^i \lambda_i, \qquad  \Lambda_g^\vee(f) = \sum_{i=0}^g (-1)^i \lambda_i f^{g-i}, \qquad \Lambda_g^\vee(-f-1)  =(-1)^g  \sum_{i=0}^g  \lambda_i (f+1)^{g-i}.
\end{equation}
Since we are integrating over the $(3g-3)$-dimensional moduli space $\overline{M}_g$, we only care about the degree $(3g-3)$ class in the product $ \Lambda^\vee_g(1) \Lambda^\vee_g(f) \Lambda^\vee_g(-f-1) $. It is easy to see that this class is given by
\begin{gather}
(-1)^{3g-3} \Big( \lambda_{g-1}^3 \left(-f (f+1) \right) \nonumber \\+ \lambda_g \lambda_{g-1} \lambda_{g-2} \left(f (f+1)^2 - f^2(f+1) + (f+1)^2 - (f+1) +f^2 + f \right)\Big),\label{eq:class}
\end{gather}
where we used the fact that $\lambda_g^2=0$, which follows from Mumford's relation
\begin{equation}\label{Mumford}
\Lambda_g^\vee(t) \Lambda_g^\vee(-t) = (-1)^{g} t^{2g}
\end{equation}
evaluated at $t=0$. Simplifying \eqref{eq:class}, we get that the degree $(3g-3)$ class is given by
\begin{equation}
(-1)^{3g-3}  f(f+1) \left(  - \lambda_{g-1}^3 + 3  \lambda_g \lambda_{g-1} \lambda_{g-2} \right).
\end{equation}
So we have
\begin{equation}
F_g =  (-1)^{g} \langle  - \lambda_{g-1}^3 + 3  \lambda_g \lambda_{g-1} \lambda_{g-2}  \rangle .
\end{equation}
But it also follows from Mumford's relation (from the term in $t^{2}$ in (\ref{Mumford})) that 
\begin{equation}
\langle \lambda_{g-1}^3  \rangle = 2 \langle  \lambda_g \lambda_{g-1} \lambda_{g-2} \rangle,
\end{equation}
hence
\begin{equation}
F_g =  \frac{(-1)^{g}}{2} \langle \lambda_{g-1}^3 \rangle.
\end{equation}
By the result of Faber and Pandharipande \cite{Faber:2000}, we know the value of this Hodge integral:
\begin{equation}
\langle \lambda_{g-1}^3 \rangle = \frac{|B_{2g}| |B_{2g-2}|}{2g (2g-2) (2g-2)!}.
\end{equation}
Therefore, we obtain
\begin{equation}
F_g = (-1)^{g}  \frac{|B_{2g}| |B_{2g-2}|}{2(2g) (2g-2) (2g-2)!},
\end{equation}
as expected from Gromov-Witten theory. 
\end{proof}

\section{Conclusion}

\label{s:conclusion}

In this paper we proved that the free energies computed by the Eynard-Orantin recursion applied to the mirror curve of $\mathbb{C}^3$ reproduce the corresponding Gromov-Witten invariants, as conjectured in \cite{BS:2011}. Our proof relies on previous work of Chen and Zhou \cite{Chen:2009, Zhou:2009}, where the correlation functions are computed in terms of Hodge integrals. 

It would however be nice to obtain a direct proof of the Faber-Pandharipande formula for the $F_g$ without relying on Hodge integrals, using  directly the geometry of the mirror curve. The mirror curve to $\mathbb{C}^3$ is a ``pair of pants'' (genus $0$ with three punctures), hence constitutes the fundamental building block for general mirror curves to toric Calabi-Yau threefolds, as discussed in \cite{BS:2011}. It would be nice to understand how the particular combination of Bernoulli numbers appearing in the $F_g$ naturally comes out of the geometry of a pair of pants. This may be difficult to do for a generic choice of framing, but since we know that the $F_g$ are framing-independent, it may be possible to do explicit calculations for a particularly clever choice of framing ($f=1$ seems to be the natural choice).

\appendix

\section{Connection with the formalism of \cite{Eynard:2011}}  \label{appA}

In a recent paper \cite{Eynard:2011}, Eynard gave a combinatorial interpretation of the coefficients found in the computation of the correlation functions (and of the free energies) for an arbitrary spectral curve  with one ramification point (theorem $3.3$ of \cite{Eynard:2011}). He then applied his general result to the $\mathbb{C}^3$ case, and with the help of several specific identities obtained a formula for the $W_n^g$ which looks very similar to the one proved by Chen and Zhou. In this Appendix we show that the two formulae indeed coincide.

Eynard's formula for the $W_n^g$ is (eq. (7.47), p.$33$):
\begin{equation} W_{\text{Eyn},n}^g(z_1,\dots,z_n)=2^{d_{g,n}}e^{-\tilde{t}_0 \chi_{g,n}} \sum_{d_1,\dots,d_n} \prod_{i} (-1)^{d_i}d\xi_0^{(d_i)}(z_i)\left<\psi_1^{d_1}\dots \psi_n^{d_n} \Gamma_{\text{Eyn}}(f) \right>_g,\end{equation}
with
\begin{equation}
\Gamma_{\text{Eyn}}(f) = \Lambda_{\text{Eyn}}(1)  \Lambda_{\text{Eyn}}(f) \Lambda_{\text{Eyn}}(-f-1).
\end{equation}
In this Appendix for brevity we will only work out the details for the one-point correlation functions, $n=1$, but it is straightforward to generalize the argument. For $n=1$ Eynard's formula reduces to:
\begin{equation}\label{Eyn1}W_{\text{Eyn},1}^g(z)=2^{d_{g,1}}e^{-\tilde{t}_0 \chi_{g,1}} \sum_{b\geq 0}  (-1)^{b}d\xi_0^{(b)}(z)\left<\psi_1^{b}  \Gamma_{\text{Eyn}}(f) \right>_g\end{equation}  
In the above formula we have:
\begin{gather}
d_{g,n}=3g-3+n, \qquad \chi_{g,n}=2-2g-n,\qquad \tilde{t}_0=\ln \sqrt{\frac{f(f+1)}{8}},\\
 \xi_0(z)=\sqrt{\frac{2f}{(1+f)^3}}\frac{1}{z-\frac{f}{f+1}},
\end{gather}
and $\xi_0^{(d)}=\left(\frac{\mathrm{d}}{\mathrm{d} x}\right)^d\xi_0$.
The parametrization of the mirror curve used by Eynard is:
\begin{equation}\label{Para2}
X(z)=e^{-x(z)}=z^f(1-z) \qquad  Y(z)=e^{-y(z)}=z
\end{equation}
Note that the convention used by Eynard for the $\Lambda_{\text{Eyn}}(f)$ are different from ours. More specifically, $\Lambda_{\text{Eyn}}(f)=f^{-g}\Lambda^\vee_g(f)$. Therefore we see that in our notation:
\begin{equation}
\langle\psi_1^{b}\Lambda_\text{Eyn}(1)\Lambda_\text{Eyn}(f)\Lambda_\text{Eyn}(-1-f)\rangle_{g}=(-1)^g\left(f(1+f)\right)^{-g}\langle \tau_b\Gamma_g(f)\rangle
\end{equation} 
The prefactor $e^{-\tilde{t}_0 \chi_{g,1}}$ can also be easily evaluated:
\begin{equation}
e^{-\tilde{t}_0 \chi_{g,1}}=e^{-\frac{1-2g}{2}\ln \frac{f(1+f)}{8}}=\left(\frac{f(1+f)}{8}\right)^{g-\frac{1}{2}}
\end{equation} 
Putting this back into \eqref{Eyn1} leads to:
\begin{equation}\label{Eynn}
W_{\text{Eyn},1}^g(z)=(-1)^g \frac{1}{\sqrt{2 f(1+f)}}\sum_{b\geq 0}(-1)^b \langle \tau_b\Gamma_g(f)\rangle d \xi_0^{(b)}(z)
\end{equation}
The formula obtained by Chen and Zhou is:
\begin{equation}
W_1^g(t) =  (-1)^{g} \sum_{b \geq 0} \langle \tau_b \Gamma_g(f) \rangle \mathrm{d} \phi_b(t).
\end{equation}
So in order to identify Eynard's result with the result of Chen and Zhou, we need to relate the $\xi_0^{(b)}(z)$ to our $\phi_b(t)$. Let us start by relating the two parameterizations of the curve. Eynard's $z$ is related to our $t$ by
\begin{equation}
z = \frac{1}{f+1} \left( f + \frac{1}{t} \right).
\end{equation}
Therefore, we have
\begin{align}
\xi_0^{(0)}(z(t)) =& \sqrt{\frac{2f}{(1+f)^3}}\frac{1}{z(t)-\frac{f}{f+1}} \\
=& \sqrt{\frac{2f}{1+f}} t \\
=&  \sqrt{2 f (f+1)} \phi_0(t) + \sqrt{\frac{2f}{1+f}} .
\end{align}
That is,
\begin{equation}
\mathrm{d} \xi_0^{(0)} = \sqrt{2 f (f+1)} \mathrm{d} \phi_0.
\end{equation}
Now using \eqref{Para2} we have
\begin{align}
\xi_0^{(b)} (z(t))=& \left(\frac{\mathrm{d}}{\mathrm{d} x}\right)^b\xi_0(z(t)) \\
=& \left( \frac{\mathrm{d} z}{\mathrm{d} x} \frac{\mathrm{d}}{\mathrm{d} z} \right)^b \xi_0(z(t))\\
=& \left(\frac{z(1-z)}{(f+1)z - f}  \frac{\mathrm{d}}{\mathrm{d} z} \right)^b \xi_0 (z(t))\\
=& \left(- \frac{t(t f+1)(t-1)}{f+1}    \frac{\mathrm{d}}{\mathrm{d} t}\right)^b \xi_0 (z(t)).
\end{align}
But from \eqref{Phib} we have
\begin{equation}
\phi_b(t) =  \left( \frac{t(t f+1)(t-1)}{f+1}    \frac{\mathrm{d}}{\mathrm{d} t}\right)^b \phi_0(t),
\end{equation}
and since
\begin{equation}
\xi_0(z(t)) = \sqrt{2 f (f+1)} \phi_0(t) + \sqrt{\frac{2f}{1+f}},
\end{equation}
we conclude that (the extra constant term in $\xi_0(z(t))$ does not matter since we are taking derivatives of $\xi_0(z(t))$):
\begin{equation}
\xi_0^{(b)}(z(t)) = (-1)^b \sqrt{2 f (f+1)}  \phi_b(t).
\end{equation}
Putting this back into \eqref{Eynn}, we obtain
\begin{align}
W_{\text{Eyn},1}^g(z(t)) = (-1)^g \sum_{b\geq 0}\langle \tau_b\Gamma_g(f)\rangle d \phi_b(t) = W_1^g(t).
\end{align}
Therefore, Eynard's formula for the correlation functions, in the case of $\mathbb{C}^3$, is precisely equal to the formula proved by Chen and Zhou.

\section{Relation to matrix models} \label{appB}

In this appendix we wish to illustrate a discrepancy between the normalization of matrix models, encoded in the constant terms of the form considered in this paper, and the outcome of the topological recursion. 

The topological recursion formulated in \cite{Eynard:2007} is a solution of the loop equations of matrix models. In general, when applied to the spectral curve of a given matrix model, it reproduces its correlation functions and free energies. However, from the viewpoint of the topological recursion, there is an integration constant ambiguity in the definition of $F_g$, and their particular definition (\ref{eq:fg}) is chosen so that they fulfill certain homogeneity conditions \cite{Eynard:2007}. This does not guarantee that the constant contributions to $F_g$ agree with those of the original matrix model, from which the spectral curve is derived. And in fact, in several cases related to the remodeling conjecture, these contributions differ. 

Such discrepancies for matrix models for the resolved conifold were discussed in \cite{BS:2011}. For example, the normalization factor of the conifold matrix model derived in \cite{OSY} is given by $M(q)$ (with MacMahon function given in (\ref{eq:macmah})); and as its spectral curve coincides with the mirror curve for the conifold, the topological recursion also gives rise to the same normalization, in agreement with Gromov-Witten theory. On the other hand, the normalization of a different conifold matrix model derived in \cite{Eynard:2008ii} does not involve any factor of MacMahon function; however its spectral curve also agrees (up to symplectic transformation) with the mirror curve for the conifold, hence the overall normalization arising from the topological recursion is also given by $M(q)$. So we see that even though, by construction, the two conifold matrix models mentioned above are normalized differently, they give rise to symplectically equivalent spectral curves, and hence the topological recursion produces the same constant contributions in both cases.

It turns out that the above discrepancy can be observed even in a simpler example, related directly to the mirror $\mathbb{C}^3$ geometry which we consider in this paper, and a single MacMahon function. It is known that the MacMahon function is a generating function of plane partitions, and a matrix model encoding such a generating function was constructed in \cite{Eynard:2009nd,OSY}, and its refined version in \cite{Sulkowski:2010ux,Eynard:2011vs}. By construction the partition function of this model is equal to the MacMahon function, and the matrix model takes the form
\begin{equation}  
Z_{\textrm{matrix}} = M(q) = \int \mathcal{D}U \, \det\Big( \prod_{k=1}^{\infty} (1 + q^k e^{-iU}) \Big)  \det\Big( \prod_{k=0}^{\infty} (1 + q^k e^{iU})  \Big)    \label{ZmatrixC3},
\end{equation}
where matrices $U$ of infinite size are integrated over, and $\mathcal{D}U$ is the unitary Vandermonde measure. The spectral curve of the above model was computed explicitly in \cite{Eynard:2011vs} (more generally, in that paper the spectral curve for a refined model with arbitrary $\beta$ was computed; setting $\beta=1$ we get the non-refined spectral curve). This curve reads
\begin{equation}
x^2 = \frac{(1-y)^2}{y}.   \label{C3spectralcurve}
\end{equation}
Taking the square root and comparing with (\ref{eq:mirrorc3}) we see that this is\footnote{Alternatively, by writing the curve (\ref{C3spectralcurve}) as $\big(x-\frac{1-y}{\sqrt{y}}\big)\big(x+\frac{1-y}{\sqrt{y}}\big)=0$, one might interpret it as having two components, each one representing mirror curve for $\mathbb{C}^3$. It would be interesting to study whether one can apply the topological recursion directly to such curves with multiple components. } a mirror curve for $\mathbb{C}^3$ in framing $f=-\frac{1}{2}$. Therefore, by Theorem \ref{thm:main}, the topological recursion applied to this curve computes the free energies as in (\ref{Final}), consistent with Gromov-Witten theory, which arise from the expansion of $M(q)^{1/2}$. Hence we directly see a discrepancy with the original generating function of plane partitions in (\ref{ZmatrixC3}), from which the matrix model was constructed. 

Interestingly, in a sense, we see that the mirror curve for $\mathbb{C}^3$ given in (\ref{eq:mirrorc3}) is naturally associated both to $M(q)$ and $M(q)^{1/2}$, depending on the perspective one is considering. In this context it would be interesting to find a matrix model whose spectral curve would agree with the $\mathbb{C}^3$ mirror curve, and whose partition function by construction would be equal to $M(q)^{1/2}$ and directly agree with Gromov-Witten theory.

\end{document}